\documentclass[1p,nopreprintline]{elsarticle}

\usepackage{lineno,hyperref}
\modulolinenumbers[5]

\journal{ }

\usepackage{amsmath,amssymb,amsfonts,amsthm}
\usepackage{algorithmic}
\usepackage{textcomp}
\usepackage{bm,graphicx}

\newtheorem{theorem}{Theorem}

\newtheorem{example}{Example}
\newtheorem{corollary}{Corollary}

\newproof{pf}{Proof}

\bibliographystyle{elsarticle-num}


\begin{document}
	
	\begin{frontmatter}
		
		\title{Convex optimization of bioprocesses}
		
		\author{Josh A. Taylor}
		\ead{josh.taylor@utoronto.ca}
		\address{The Edward S. Rogers Sr. Department of Electrical\\ \& Computer Engineering, University of Toronto, Toronto, Canada}
		
		\author{Alain Rapaport}
		\ead{alain.rapaport@inrae.fr}
		\address{MISTEA, Univ. Montpellier, INRAE, Institut Agro, Montpellier, France}
		%
		
		\author{Denis Dochain}
		\ead{denis.dochain@uclouvain.be}
		\address{ICTEAM, Universit\'{e} Catholique de Louvain, Belgium}
		%

		%
		%
		
		\begin{abstract}
			We optimize a general model of bioprocesses, which is nonconvex due to the microbial growth in the biochemical reactors. We formulate a convex relaxation and give conditions guaranteeing its exactness in both the transient and steady state cases. When the growth kinetics are modeled by the Monod function under constant biomass or the Contois function, the relaxation is a second-order cone program, which can be solved efficiently at large scales. We implement the model on a numerical example based on a wastewater treatment system.
		\end{abstract}
		
		\begin{keyword}
			Bioprocess; compartmental system; convex relaxation; second-order cone programming; wastewater treatment.           
		\end{keyword}
		
	\end{frontmatter}

\section{Introduction}\label{intro}
We optimize a model of dynamical bioprocesses consisting of a set of biochemical reactors interconnected by diffusion and mass flow. The objectives include minimizing substrate outflow, maximizing biogas production, and tracking setpoints. Within each reactor, several microbial reactions convert any number of biotic or abiotic reactants into biomass and/or products. This setup describes a variety of physical systems such as wastewater treatment networks, the production of various chemicals, and compartmental approximations of bioprocesses in continuous media. Here we focus on the case when the kinetics can be represented by a second-order cone (SOC) constraint, as recently shown in~\cite{taylor2021grad} for the Monod~\cite{monod1949growth} and Contois~\cite{contois1959kinetics} growth rates.

The most closely related topics to ours are chemical process optimization, control and optimization of wastewater systems, and control of bioprocesses. Most existing approaches to process optimization~\cite{biegler2010nonlinear} and wastewater~\cite{ocampo2010model} do not explicitly model the microbial growth, and often use either linear programming or general nonlinear solvers. There have been many applications of nonlinear control~\cite{bastin2013line} and optimization~\cite{srinivasan2003dynamic} to bioprocesses, but not convex relaxations or second-order cone programming (SOCP).

Our main results are generalizations of those in~\cite{taylor2021grad}, which focused on the gradostat with a single reaction~\cite{smith1995theory}. Here we allow for any number of substrates and biomasses, general convex objectives, and multiple biochemical reactions. Our original theoretical contributions are as follows.
\begin{itemize}
\item In Section~\ref{relax}, we formulate a convex relaxation for optimizing the trajectory and steady state solution of a general bioprocess.
\item In Section~\ref{exact}, we give conditions under which the relaxations are guaranteed to be exact in both the transient and steady state cases.
\end{itemize}

To streamline exposition, the only external inputs to the model are the influent concentrations, e.g., biochemical oxygen demand and ammonia. Our main exactness results straightforwardly apply when the flow rates are also variable, but the resulting bilinearities make the problem nonconvex. This can be handled using techniques like disjunctive programming, as in~\cite{taylor2021grad}, or further convex relaxation, which we discuss at the end of Section~\ref{statement}.

We apply our results in two examples. In Section~\ref{app:gradostat}, we show that our exactness conditions simplify to those in~\cite{taylor2021grad} when specialized to the gradostat. In Section~\ref{app:ww}, we optimize the allocation of sewage to three wastewater treatment plants over two weeks. The relaxation is exact, and takes roughly twenty minutes to solve using SOCP~\cite{Boyd1998SOCP}.

\section{Setup}\label{setup}
\subsection{Network modeling}\label{s:def}
The system consists of $s$ well-mixed tanks interconnected by mass flow and diffusion. We denote the set of tanks $\mathcal{S}$.  $V\in\mathbb{R}^{s\times s}$ is a diagonal matrix in which $V_{ii}$ is the volume of tank $i$. Tank $i$ has water inlet flow rate $Q_i^{\textrm{in}}$ and outlet flow rate $Q_i^{\textrm{out}}$. We let $Q_{ij}$ denote the flow from tank $i$ to tank $j$.
Let $d_{ij}$ denote the diffusion between tanks $i$ and $j$, where $d_{ij}=d_{ji}$. Let $C=\textrm{diag}\left[Q_i^{\textrm{in}}\right]$,
\begin{align*}
M_{ij}&=\left\{
\begin{array}{ll}
Q_{ji}, & i\neq j\\
-Q_i^{\textrm{out}}-\sum_{k\in{\mathcal{S}}}Q_{ik},& i= j
\end{array}
\right., \quad 
L_{ij}&=\left\{
\begin{array}{ll}
d_{ij}, & i\neq j\\
-\sum_{k\in{\mathcal{S}}}d_{ik}, & i= j
\end{array}
\right.,
\end{align*}
and $N=M+L$. $M$ and $L$ are respectively compartmental and Laplacian matrices. $M$ is invertible if the network is outflow connected, which is to say that there is a directed path from every tank to some tank with outflow~\cite{jacquez1993qualitative}. Because $L$ is negative semidefinite, $N$ is also invertible if $M$ is outflow connected, and potentially even if $M$ is not outflow connected.

\subsection{Microbial growth}
We model the microbial growth in the tanks using the notation of Section 1.5 of~\cite{bastin2013line}. There are $m$ substrates and biomasses in each perfectly mixed tank. $\xi_i\in\mathbb{R}^m_+$ is the process state vector of tank $i\in\mathcal{S}$, which contains the concentrations of the substrates and biomasses, and $\xi_i^{\textrm{in}}\in\mathbb{R}^m_+$ is the corresponding influent concentration vector. This model is minimal in that $\xi$ includes intermediary products, e.g., substrates produced by one reaction and consumed by another, but not final products such as the $\textrm{CH}_4$ ultimately produced by anaerobic digestion.

There are $r$ different types of reactions that convert substrates to other substrates and biomasses. $\phi_i(\xi_i)\in\mathbb{R}^r_+$ is a vector of the reaction kinetics in tank $i$. We are interested in the case where the elements of $\phi_i(\xi_i)$ are concave functions, and in particular representable as SOC constraints. We show how to do this for Monod and Contois kinetics later in Examples \ref{ex:contois} and \ref{ex:monod}.

Let $\kappa_i\in\mathbb{R}^{m\times r}$ be the stoichiometric matrix relating the reaction vector, $\phi_i(\xi_i)$, to the evolution of the process state in tank $i$. The dynamics in tank $i\in\mathcal{S}$ are
\begin{equation*}
V_{ii}\dot{\xi}_i=V_{ii}\kappa_i\phi_i(\xi_i) -Q_{i}^{\textrm{out}}\xi_i- \sum_{j\in\mathcal{S}}(Q_{ij}+d_{ij})\xi_i
+Q_{i}^{\textrm{in}}\xi_i^{\textrm{in}}+\sum_{j\in\mathcal{S}}(Q_{ji}+d_{ij})\xi_j.
\end{equation*}
The following example illustrates $\xi_i$ and $\kappa_i$.

\begin{example}[Two-step anaerobic digestion]
There are two substrates, $S_i^a$ and $S_i^b$, and two biomasses, $X_i^a$ and $X_i^b$. We let $\xi_i=\left[S_i^a,S_i^b,X_i^a,X_i^b\right]^{\top}$. $S_i^a$ is converted to both $X_i^a$ and $S_i^b$ at the rate $\mu^a\left(S_i^a\right)X_i^a$. $S_i^b$ is converted to $X_i^b$ at the rate $\mu^b\left(S_i^b\right)X_i^b$. Therefore,
\[
\phi_i(\xi_i)=\left[
\begin{array}{c}
\mu^a\left(S_i^a\right)X_i^a\\
\mu^b\left(S_i^b\right)X_i^b
\end{array}
\right]\;\textrm{and}\;
\kappa_i=\left[
\begin{array}{cc}
-1&0\\
1&-1\\
1&0\\
0&1
\end{array}
\right].
\]
\end{example}

We now write the dynamics in vector form. We suppress subscripts to represent stacked vectors, i.e., $\xi=[\xi_1,...,\xi_s]^{\top}$ and $\phi(\xi)=[\phi_1(\xi_1),...,\phi_s(\xi_s)]^{\top}$. Let $A\otimes B$ denote the Kronecker product of $A$ and $B$, $I_\alpha\in\mathbb{R}^{\alpha\times \alpha}$ the identity matrix, and $\hat{A}=A\otimes I_m$. Let $K$ be a block diagonal matrix with $\kappa_1,...,\kappa_s$ on its main diagonal. If $\kappa_i=\kappa$ for all $i\in\mathcal{S}$, then $K=I_s\otimes \kappa$. The dynamics of the full system are given in vector form by
\begin{align}
\hat{V}\dot{\xi}=\hat{V}K \phi(\xi)+\hat{N}\xi+\hat{C}\xi^{\textrm{in}}.\label{dyn}
\end{align}
We allow the dynamics to be non-autonomous, in which case $\hat{N}$, $\hat{C}$, and $\xi^{in}$ can be time-varying.

\subsection{Discretization in time}
To make (\ref{dyn}) compatible with finite-dimensional optimization, we replace the derivatives with a numerical approximation, which we denote $\mathcal{D}_n$. For example, in the case of the implicit Euler method with time step $\Delta$, $\mathcal{D}_n[\xi(\cdot)]=(\xi(n)-\xi(n-1))/\Delta$. $\mathcal{D}_n$ could also be a more sophisticated approximation such as a Runge-Kutta scheme~\cite{betts1998survey}. The time periods are indexed $n\in\mathcal{N}=\{1,...,\tau\}$. We have
\begin{align}
\hat{V}\mathcal{D}_n[\xi(\cdot)]=\hat{V}K  \phi(\xi(n))+\hat{N}(n)\xi(n)+\hat{C}(n)\xi^{\textrm{in}}(n)
\end{align}
for $n\in\mathcal{N}$. The initial condition is $\xi(0)=\xi_0$.

\subsection{Objectives}
We consider objectives of the form
\begin{equation}
\mathcal{F}(\xi,T)=\sum_{n\in\mathcal{N}} \mathcal{F}_{\xi}(\xi(n))+\mathcal{F}_{\phi}(T(n)),
\end{equation}
where $\mathcal{F}_{\xi}$ and $\mathcal{F}_{\phi}$ are convex and $T(n)=\phi(\xi(n))$. The following are examples.
\begin{itemize}
\item Minimizing the outflow of substrates, 
\[
\mathcal{F}_{\xi}(\xi(n))=\sum_{i\in\mathcal{S}}Q_{i}^{\textrm{out}}\eta_i^{\top}\xi_i(n),
\]
where $\eta_i$ is a vector that selects the entries of $\xi_i(n)$ corresponding to pollutants.

\item The production of biogas in a tank is proportional to the kinetics that convert substrates to biomass. Let $\sigma_i\in\mathbb{R}^m_+$ be a vector that is only nonzero for entries of $T_i(n)$ corresponding to biogas production. Let $\mathcal{M}\subseteq \mathcal{S}$ be the subset of tanks that can capture biogas from anaerobic digestion. We maximize biogas through the objective
\[
\mathcal{F}_{\phi}(T(n))=-\sum_{i\in\mathcal{M}}V_{ii}\sigma_i^{\top}T_i(n).
\]

\item Setpoint tracking,
\[
 \mathcal{F}_{\xi}(\xi(n))=\left(\xi(n)-\bar{\xi}\right)^{\top}A\left(\xi(n)-\bar{\xi}\right),
\]
where $A\succeq0$ and $\bar{\xi}$ is a desired operating point.
\end{itemize}

\subsection{Problem statement}\label{statement}
We aim to solve the following optimization problem.
\begin{subequations}
\begin{align}
\mathcal{P}\quad&\min\;\mathcal{F}(\xi,T) \label{P0}\\
\textrm{such that}\quad&T(n)=\phi(\xi(n)),\quad n\in\mathcal{N}\label{P3}\\
&\hat{V}\mathcal{D}_n[\xi(\cdot)]=\hat{V}K  T(n)+\hat{N}(n)\xi(n)
+\hat{C}(n)\xi^{\textrm{in}}(n),\quad n\in\mathcal{N}\label{P1}\\
&\left(\xi,\xi^{\textrm{in}},T\right)\in\Omega.\label{QUps}
\end{align}
\end{subequations}
$\mathcal{P}$ models the optimization of a broad range of bioprocesses such as wastewater treatment. We refer the reader to~\cite{bastin2013line} for broad coverage of this topic.

A solution of $\mathcal{P}$ is a trajectory $\left(\xi(n),\xi^{\textrm{in}}(n),T(n)\right)$, $n\in\mathcal{N}$. The flows and diffusions between tanks, encoded by the matrices $\hat{N}(n)$ and $\hat{C}(n)$, are not decision variables. For this reason constraint (\ref{P1}) is linear. The set $\Omega$ in (\ref{QUps}) consists of linear constraints such as the initial condition, total input matter, and maximum substrate concentrations; several other examples are given in~\cite{taylor2021grad}. Note that $\Omega$ can constrain $T(\cdot)$ so as to allow constraints on the growth without adding nonlinearities. The only nonconvexity is therefore (\ref{P3}), the growth constraint; this is the focus of the next two sections.

We note that in many applications, the flow rates are important decision variables. They are parameters here because our focus is on incorporating the growth kinetics in a convex fashion. In the case that the flow rates are variable, (\ref{P1}) becomes bilinear and hence nonconvex. There are several ways to handle the bilinearity, including
\begin{itemize}
\item McCormick~\cite{mccormick1976computability} and lift-and-project~\cite{Sherali1992bilinear} relaxations;
\item disjunctive programming reformulations if the flow variables are binary, as in~\cite{taylor2021grad};
\item and finding a local minimum via nonlinear programming, e.g., exploiting the biconvex structure with the Alternating Direction Method of Multipliers~\cite{boyd2011distributed}.
\end{itemize}
All three of the above techniques are viable because, as described in the next section, we have a tractable way to represent the growth constraint, (\ref{P3}).

\section{Convex relaxation}\label{relax}
$\mathcal{P}$ is nonconvex because constraint (\ref{P3}) is a nonlinear equality. One way around this difficulty is to instead solve a convex relaxation of $\mathcal{P}$, as in~\cite{taylor2021grad}. If all elements of the vector $\phi(\cdot)$ are concave functions, we obtain a convex relaxation by replacing (\ref{P3}) with the inequality
\begin{equation}
T(n)\leq \phi(\xi(n)),\quad n\in\mathcal{N}.\label{P3R}
\end{equation}
We refer to the resulting optimization as $\mathcal{P}_{\textrm{R}}$. As mentioned earlier, we are interested in the case where (\ref{P3R}) is concave and, ideally, representable as an SOC constraint. Several such examples are given below.

\begin{example}[Contois growth]\label{ex:contois}
Suppose that there is a substrate of concentration $S$, a biomass of concentration $X$, and the growth rate is Contois~\cite{contois1959kinetics}. Then constraint (\ref{P3R}) takes the form
\[
T(n)\leq \frac{\mu S(n)X(n)}{k_{\textrm{C}}X(n)+S(n)},
\]
where $\mu$ and $k_{\textrm{C}}$ are constant parameters. As shown in~\cite{taylor2021grad}, this is concave and can be written as the SOC constraint
\begin{align}
\left\|\left[\begin{array}{c}
\mu S(n)\\
k_{\textrm{C}}T(n)\\
\mu kX(n) 
\end{array}\right]\right\|\leq \mu k_{\textrm{C}}X(n) +  \mu S(n) - kT(n).\label{SOCContois}
\end{align}
\end{example}

\begin{example}[Monod growth with constant biomass]\label{ex:monod}
Consider Example~\ref{ex:contois}, but now suppose that the growth rate is Monod~\cite{monod1949growth}. Then constraint (\ref{P3R}) takes the form
\[
T(n)\leq \frac{\mu S(n)X(n)}{k_{\textrm{M}}+S(n)}.
\]
This constraint is quasiconcave. It becomes concave if we assume that the biomass in each time period is not an optimization variable, but an exogenous parameter, i.e., $X(n)=\bar{X}(n)$ for $n\in\mathcal{N}$. This approximation is often valid because the biomass concentration is typically larger and varies more slowly than the substrate concentrations, and is therefore relatively insensitive to the substrates. In this case, as shown in~\cite{taylor2021grad}, it can be written as the SOC constraint
\begin{align}
\left\|\left[\begin{array}{c}
\mu S(n) \bar{X}(n)\\
k_{\textrm{M}}T(n)\\
\mu k\bar{X}(n) 
\end{array}\right]\right\|\leq \mu k_{\textrm{M}}\bar{X}(n) +  \mu S(n)\bar{X}(n) - kT(n).\label{SOCMonod}
\end{align}
\end{example}

\begin{example}[Interactive and non-interactive growth]\label{ex:interactive}
Suppose the growth of the biomass depends on two rates, $\mu^a(S(n),X(n))$ and $\mu^b(S(n),X(n))$, and the individual kinetics constraints, $T^a(n)\leq\mu^a(S(n),X(n))X(n)$ and $T^a(n)\leq\mu^b(S(n),X(n))X(n)$, both have SOC representations. The dependency often takes one of two forms: non-interactive, $\min\{\mu^a(S(n),X(n)),\mu^b(S(n),X(n))\}$, which in ecological modeling is known as Liebig's Law, and interactive, $\mu^a(S(n),X(n))\mu^b(S(n),X(n))$, which is common in models of bioprocesses.

The non-interactive case is enforced by the two individual kinetics constraints along with $T(n)\leq T^a(n)$ and $T(n)\leq T^b(n)$. The interactive case is in general nonconvex, and does not have an SOC representation. However, the geometric mean of the growth rates, $\sqrt{\mu^a(S(n),X(n))\mu^b(S(n),X(n))}$, does lead to an SOC representation. It is enforced by the two individual kinetics constraints along with $T(n)^2\leq T^a(n)T^b(n)$. The latter is hyperbolic, a type of SOC constraint~\cite{Boyd1998SOCP}.
\end{example}

\section{Exactness}\label{exact}
When (\ref{P3R}) is satisfied with equality, $\mathcal{P}_{\textrm{R}}$ is exact, i.e., has an optimal solution that also solves $\mathcal{P}$; this is the ideal outcome. When (\ref{P3R}) is not satisfied with equality, $\mathcal{P}_{\textrm{R}}$ might still provide a close approximation of $\mathcal{P}$, but this is hard to guarantee. It is therefore useful to have conditions, even if narrow, under which the exactness of $\mathcal{P}_{\textrm{R}}$ is guaranteed.

For the rest of this section we let
\[
\mathcal{D}_n[\xi(\cdot)]=(\xi(n)-\xi(n-1))/\Delta,
\]
which corresponds to the implicit Euler step. We assume that the only constraint specified by $\Omega$ in (\ref{QUps}) is an initial condition, $\xi(0)=\xi_0$, and that $\xi^{\textrm{in}}(\cdot)$ is fixed; note that as long as exactness holds for all feasible values of $\xi^{\textrm{in}}(\cdot)$, it holds when $\xi^{\textrm{in}}(\cdot)$ is a variable. We also assume that strong duality holds for $\mathcal{P}_{\textrm{R}}$; this is a mild assumption that, e.g., holds as long as there is a feasible solution in which $\xi(n)>0$ for all $n\in\mathcal{N}$.

Let $\mathcal{J}(\xi(n))\in\mathbb{R}^{rs\times ms}$ denote the Jacobian matrix of $\phi(\cdot)$ at $\xi(n)$. For convenience, we define the following quantities:
\begin{align*}
\Gamma(n)&=\frac{1}{\Delta}\left(\hat{V}/\Delta-\hat{N}(n)^{\top}-\mathcal{J}(\xi(n))^{\top}K^{\top}\hat{V}\right)^{-1}\hat{V}\\
&=\left(I_{ms}-\Delta\hat{V}^{-1}\left(\hat{N}(n)^{\top}+\mathcal{J}(\xi(n))^{\top}K^{\top}\hat{V}\right)\right)^{-1}\\
\Omega(n)&=-\nabla  \mathcal{F}_{\phi}(T(n))-\Delta K^{\top}\hat{V}\sum_{k=n}^{\tau}\left(\prod_{l=n}^{k}\Gamma(l)\right)\hat{V}^{-1}
\left(\nabla  \mathcal{F}_{\xi}(\xi(k))+\mathcal{J}(\xi(k))^{\top}\nabla  \mathcal{F}_{\phi}(T(k))\right).
\end{align*}
Observe that if $\Delta$ is small enough, $\Gamma(n)$ is positive definite and close to the identify matrix.

\begin{theorem}\label{Thgen}
$\mathcal{P}_{\textrm{R}}$ is exact if at an optimal solution, $\Omega(n)>0$ for all $n\in\mathcal{N}$.
\end{theorem}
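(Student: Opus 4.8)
The plan is to establish exactness through the Karush--Kuhn--Tucker (KKT) conditions of the convex program $\mathcal{P}_{\textrm{R}}$, by showing that $\Omega(n)$ is exactly the Lagrange multiplier of the relaxed growth constraint (\ref{P3R}) at an optimal solution. Since $\phi(\cdot)$ is concave, (\ref{P3R}) is a convex constraint, and under the stated strong-duality assumption the KKT conditions are necessary and sufficient for optimality. I would introduce multipliers $\lambda(n)\geq0$ for (\ref{P3R}) and free-sign $\nu(n)$ for the linear dynamics (\ref{P1}), written with the implicit Euler step $\mathcal{D}_n[\xi(\cdot)]=(\xi(n)-\xi(n-1))/\Delta$. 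Once $\lambda(n)=\Omega(n)$ is established, the hypothesis $\Omega(n)>0$ forces $\lambda(n)>0$ componentwise, so complementary slackness makes (\ref{P3R}) tight in every component, i.e.\ $T(n)=\phi(\xi(n))$, which is precisely exactness.

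First I would write the stationarity conditions of the Lagrangian. Differentiating in $T(n)$ gives
\[
\nabla\mathcal{F}_{\phi}(T(n))+\lambda(n)-K^{\top}\hat{V}\nu(n)=0,
\]
so $\lambda(n)=K^{\top}\hat{V}\nu(n)-\nabla\mathcal{F}_{\phi}(T(n))$. Differentiating in $\xi(n)$, and noting that $\xi(n)$ enters the dynamics at periods $n$ and $n+1$ through the Euler difference, yields the adjoint relation
\[
\nabla\mathcal{F}_{\xi}(\xi(n))-\mathcal{J}(\xi(n))^{\top}\lambda(n)+\left(\hat{V}/\Delta-\hat{N}(n)^{\top}\right)\nu(n)-\hat{V}\nu(n+1)/\Delta=0 .
\]
Substituting the expression for $\lambda(n)$ into this relation eliminates the growth multiplier and moves the term $\mathcal{J}(\xi(n))^{\top}K^{\top}\hat{V}\nu(n)$ to the left, so the coefficient of $\nu(n)$ becomes exactly $\hat{V}/\Delta-\hat{N}(n)^{\top}-\mathcal{J}(\xi(n))^{\top}K^{\top}\hat{V}$, which is precisely the matrix inverted in the definition of $\Gamma(n)$.

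This is the crux, and the one step requiring care: the Jacobian $\mathcal{J}(\xi(n))$ enters the costate system matrix only because $\lambda(n)$ couples to $\nu(n)$. Inverting this matrix turns the adjoint relation into the backward recursion $\nu(n)=\Gamma(n)\nu(n+1)-\Delta\Gamma(n)\hat{V}^{-1}\big(\nabla\mathcal{F}_{\xi}(\xi(n))+\mathcal{J}(\xi(n))^{\top}\nabla\mathcal{F}_{\phi}(T(n))\big)$, with terminal condition $\nu(\tau+1)=0$ arising from the absence of a period-$(\tau+1)$ dynamics constraint. I would then solve this recursion in closed form, verifying by induction on $n$ running backward from $\tau$, to obtain
\[
\nu(n)=-\Delta\sum_{k=n}^{\tau}\left(\prod_{l=n}^{k}\Gamma(l)\right)\hat{V}^{-1}\left(\nabla\mathcal{F}_{\xi}(\xi(k))+\mathcal{J}(\xi(k))^{\top}\nabla\mathcal{F}_{\phi}(T(k))\right).
\]

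Finally, substituting this $\nu(n)$ back into $\lambda(n)=K^{\top}\hat{V}\nu(n)-\nabla\mathcal{F}_{\phi}(T(n))$ reproduces the defining expression for $\Omega(n)$ verbatim, giving $\lambda(n)=\Omega(n)$ and closing the argument via complementary slackness as above. The main obstacle is thus the bookkeeping in the second and third steps---correctly tracking signs through the Euler difference and confirming the closed-form solution of the costate recursion---rather than any conceptual difficulty; the positivity hypothesis does the essential work once the multiplier has been identified.
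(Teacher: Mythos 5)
Your proposal is correct and follows essentially the same route as the paper's own proof: Lagrangian stationarity, elimination of the growth multiplier to obtain a backward costate recursion with system matrix $\hat{V}/\Delta-\hat{N}(n)^{\top}-\mathcal{J}(\xi(n))^{\top}K^{\top}\hat{V}$, closed-form solution of that recursion, identification of the growth-constraint multiplier with $\Omega(n)$, and complementary slackness. The only differences are notational---your $(\lambda,\nu)$ are the paper's $(\rho,\lambda)$, and your sign convention for the dynamics multiplier ($\nu(n)$ equal to the negative of the paper's $\lambda(n)$) propagates consistently through the algebra.
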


\begin{proof}
Let $\lambda(n)\in\mathbb{R}^{ms}$ and $\rho(n)\in\mathbb{R}^{rs}$ be the respective dual multipliers of constraints (\ref{P1}) and (\ref{P3R}) for $n\in\mathcal{N}$. The Lagrangian of $\mathcal{P}_{\textrm{R}}$ is
\begin{align*}
\mathcal{L}&=\mathcal{F}(\xi,T) +\sum_{n\in\mathcal{N}}\rho(n)^{\top}\left(T(n)-\phi(\xi(n))\right)\\
&\quad+\lambda(n)^{\top}\left(\hat{V}(\xi(n-1)-\xi(n))/\Delta + \hat{V}KT(n)
+\hat{N}(n)\xi(n) + \hat{C}(n)\xi^{\textrm{in}}(n) \right).
\end{align*}
Differentiating the Lagrangian by $T(n)$ and $\xi(n)$ and setting it to zero gives
\begin{subequations}
\begin{align}
-\rho(n) &=\nabla  \mathcal{F}_{\phi}(T(n))+ K^{\top}\hat{V}\lambda(n),\quad n\in\mathcal{N},\label{dL1}\\
\mathcal{J}(\xi(n))^{\top}\rho(n)&=\nabla  \mathcal{F}_{\xi}(\xi(n)) -\left(\hat{V}/\Delta-\hat{N}(n)^{\top}\right)\lambda(n)
+\hat{V}\lambda(n+1)/\Delta,\quad n\in\mathcal{N}\setminus\tau \label{dL2}\\
\mathcal{J}(\xi(\tau))^{\top}\rho(\tau)&=\nabla  \mathcal{F}_{\xi}(\xi(\tau)) -\left(\hat{V}/\Delta-\hat{N}(n)^{\top}\right)\lambda(\tau).\label{dL3}
\end{align}
\end{subequations}
We now solve for $\rho(n)$. Premultiplying (\ref{dL1}) by $\mathcal{J}(\xi(n))^{\top}$ and summing with (\ref{dL2}) and (\ref{dL3}) gives
\begin{align*}
\lambda(\tau)=\Delta\Gamma(\tau)\hat{V}^{-1}\left(\nabla  \mathcal{F}_{\xi}(\xi(\tau))+\mathcal{J}(\xi(\tau))^{\top}\nabla  \mathcal{F}_{\phi}(T(\tau))\right),
\end{align*}
and, for $n\in\mathcal{N}\setminus\tau$,
\begin{align*}
\lambda(n)&=\Delta\Gamma(n)\hat{V}^{-1}\left(\nabla  \mathcal{F}_{\xi}(\xi(n))+\mathcal{J}(\xi(n))^{\top}\nabla  \mathcal{F}_{\phi}(T(n))\right)
+\Gamma(n)\lambda(n+1).
\end{align*}
Expanding the recursion yields
\begin{align*}
\lambda(n)&=\Delta\sum_{k=n}^{\tau}\left(\prod_{l=n}^{k}\Gamma(l)\right)\hat{V}^{-1}
\left(\nabla  \mathcal{F}_{\xi}(\xi(k))+\mathcal{J}(\xi(k))^{\top}\nabla  \mathcal{F}_{\phi}(T(k))\right).
\end{align*}
We now substitute this into (\ref{dL1}) to obtain
\begin{align*}
\rho(n)&=-\nabla  \mathcal{F}_{\phi}(T(n))-\Delta K^{\top}\hat{V}\sum_{k=n}^{\tau}\left(\prod_{l=n}^{k}\Gamma(l)\right)\hat{V}^{-1}
\left(\nabla  \mathcal{F}_{\xi}(\xi(k))+\mathcal{J}(\xi(k))^{\top}\nabla  \mathcal{F}_{\phi}(T(k))\right)\\
&=\Omega(n).
\end{align*}
From here we can see that the conditions of the theorem guarantee that $\rho(n)>0$ for all $n\in\mathcal{N}$. 
\end{proof}

Theorem~\ref{Thgen} is of limited immediate use because we must know the optimal solution of $\mathcal{P}_{\textrm{R}}$ to test if $\Omega(n)>0$. It can however be used to derive sufficient conditions for exactness that are easy to test. We now derive two such conditions that do not require knowledge of the optimal solution.

For the rest of this section, assume that $\mathcal{F}_{\xi}$ and $\mathcal{F}_{\phi}$ are linear with gradients $f_{\xi}\in\mathbb{R}^{ms}$ and $f_{\phi}\in\mathbb{R}^{rs}$. In this case, we can write
\begin{align*}
&\Omega(n)=\\
&\quad -\Delta K^{\top}\hat{V}\left(\sum_{k=n}^{\tau}\prod_{l=n}^{k}\Gamma(l)\right)\hat{V}^{-1}f_{\xi}-
\left(I_{ms} +\Delta K^{\top}\hat{V}\sum_{k=n}^{\tau}\left(\prod_{l=n}^{k}\Gamma(l)\right)\hat{V}^{-1}\mathcal{J}(\xi(k))^{\top} \right)f_{\phi}.
\end{align*}

We also assume that each element of the vector of reaction kinetics has bounded slope, so that all entries of $\mathcal{J}(\xi(n))$, $n\in\mathcal{N}$, are bounded.

Let
\[
\Psi(n)=\hat{V}^{-1}\hat{N}(n)^{\top}+\hat{V}^{-1}\mathcal{J}(\xi(n))^{\top}K^{\top}\hat{V}.
\]
The first term of $\Psi(n)$ is negative semidefinite because $N(n)$ is compartmental~\cite{jacquez1993qualitative}. The latter term is bounded by assumption, and as we will see in the examples, usually negative semidefinite---we assume that this is the case. We therefore assume that the eigenvalues of $\Psi(n)$ are in the range $[-\bar{\psi},0]$, where $\bar{\psi}>0$ is an upper bound on the magnitude. We can use the push-through identity to write
\begin{align*}
\Gamma(n)&=I_{ms}+\Delta\Psi(n)(I_{ms}-\Delta\Psi(n))^{-1}.
\end{align*}
The eigenvalues of the second term are in the range $[-\Delta\bar{\psi},0]$, and the eigenvalues of $\Gamma(n)$ are in the range of $[1-\Delta\bar{\psi},1]$.

\begin{corollary}\label{Corxi}
If $f_{\phi}=0$ and $K^{\top}f_{\xi}<0$, then there exists a $\Delta>0$ for which $\mathcal{P}_{\textrm{R}}$ is exact.
\end{corollary}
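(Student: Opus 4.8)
The plan is to invoke Theorem~\ref{Thgen} and verify its hypothesis $\Omega(n)>0$ for all $n\in\mathcal{N}$ by a perturbation argument in $\Delta$. Setting $f_{\phi}=0$ in the linear-objective expression for $\Omega(n)$ annihilates its second term, leaving
\[
\Omega(n)=-\Delta K^{\top}\hat{V}\left(\sum_{k=n}^{\tau}\prod_{l=n}^{k}\Gamma(l)\right)\hat{V}^{-1}f_{\xi}.
\]
The strategy is to extract the leading-order behavior of this expression as $\Delta\to0$ and show that the hypothesis $K^{\top}f_{\xi}<0$ forces it to be a strictly positive vector.

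First I would use the representation $\Gamma(l)=I_{ms}+\Delta\Psi(l)(I_{ms}-\Delta\Psi(l))^{-1}$ derived above. Since $\mathcal{J}(\cdot)$ is bounded by assumption and $\hat{N}(\cdot)$ is fixed, $\Psi(l)$ is bounded in norm uniformly over feasible points, so $(I_{ms}-\Delta\Psi(l))^{-1}$ stays bounded for small $\Delta$ and $\|\Gamma(l)-I_{ms}\|=O(\Delta)$ uniformly. Consequently each finite product obeys $\prod_{l=n}^{k}\Gamma(l)=I_{ms}+O(\Delta)$, with the implied constant depending only on $\tau$ and the uniform bound on $\Psi$, not on the particular optimal solution. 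Summing over the $\tau-n+1$ values of $k$ gives $\sum_{k=n}^{\tau}\prod_{l=n}^{k}\Gamma(l)=(\tau-n+1)I_{ms}+O(\Delta)$.

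Substituting this and using $\hat{V}\hat{V}^{-1}=I_{ms}$, I would obtain
\[
\Omega(n)=\Delta(\tau-n+1)\left(-K^{\top}f_{\xi}\right)+O(\Delta^2),
\]
where the $O(\Delta^2)$ remainder is again uniform in $n$ and in the optimal solution. Because $-K^{\top}f_{\xi}>0$ componentwise and $\tau-n+1\geq1$, the order-$\Delta$ term is a strictly positive vector whose entries are bounded below by $\Delta$ times a positive constant; for $\Delta$ small enough this dominates the order-$\Delta^2$ correction, yielding $\Omega(n)>0$. Since $\mathcal{N}$ is finite and all bounds are uniform, a single threshold $\bar{\Delta}>0$ works for every $n$ simultaneously, and Theorem~\ref{Thgen} then gives exactness.

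The main obstacle I anticipate is establishing the uniformity of the remainder estimate. The multiplier $\Omega(n)$ is evaluated at an optimal solution of $\mathcal{P}_{\textrm{R}}$, which itself varies with $\Delta$; to choose $\bar{\Delta}$ independently of the solution I must ensure the $O(\Delta)$ and $O(\Delta^2)$ bounds depend only on the a priori data---the uniform spectral bound $\bar{\psi}$ on $\Psi$, the horizon $\tau$, and the norms of $K$, $f_{\xi}$, and $\hat{V}$---and not on $\xi(\cdot)$ itself. This is precisely what the boundedness assumption on $\mathcal{J}$ supplies, so the argument closes.
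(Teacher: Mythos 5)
Your proposal is correct and follows essentially the same route as the paper's own (sketched) proof: invoke Theorem~\ref{Thgen}, use the spectral bound on $\Psi(l)$ to write each product $\prod_{l=n}^{k}\Gamma(l)$ as $I_{ms}$ plus $O(\Delta)$ terms, so that $\Omega(n)\approx-\Delta(\tau-n+1)K^{\top}f_{\xi}>0$ for small $\Delta$. The only difference is that you make the paper's ``$\approx$'' rigorous by tracking uniformity of the remainders over $n$ and over the ($\Delta$-dependent) optimal solution, which is a welcome tightening rather than a different argument.
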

\begin{proof}
(Sketch)
Observe that
\[
\prod_{l=n}^{k}\Gamma(l)
\]
is equal to $I_{ms}$ plus terms that are norm-bounded by positive powers of $\Delta\bar{\psi}$. Similarly,
\[
\sum_{k=n}^{\tau}\prod_{l=n}^{k}\Gamma(l)
\]
is equal to $(\tau-n+1)I_{ms}$ plus terms that are norm-bounded by positive powers of $\Delta\bar{\psi}$. We can make these terms arbitrarily small by choosing $\Delta$ small. We therefore write
\[
\sum_{k=n}^{\tau}\prod_{l=n}^{k}\Gamma(l)\approx(\tau-n+1)I_{ms}.
\]
Because $f_{\phi}=0$, we have that
\begin{align*}
\Omega(n)\approx&-(\tau-n+1)\Delta K^{\top}f_{\xi}.
\end{align*}
This is strictly positive for all $n\in\mathcal{N}$ if $K^{\top}f_{\xi}<0$.
\end{proof}

\begin{corollary}\label{Corphi}
If $f_{\xi}=0$ and $f_{\phi}>0$, then there exists a $\Delta>0$ for which $\mathcal{P}_{\textrm{R}}$ is exact.
\end{corollary}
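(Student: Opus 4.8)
The plan is to apply Theorem~\ref{Thgen}: it suffices to exhibit a $\Delta>0$ at which $\Omega(n)=\rho(n)>0$ for every $n\in\mathcal{N}$. Dual feasibility already forces $\rho(n)\ge0$, so the only content is the strict inequality. I would start from the linear-objective expression for $\Omega(n)$ and set $f_{\xi}=0$, which leaves
\[
\Omega(n)=-f_{\phi}-\Delta K^{\top}\hat{V}\sum_{k=n}^{\tau}\left(\prod_{l=n}^{k}\Gamma(l)\right)\hat{V}^{-1}\mathcal{J}(\xi(k))^{\top}f_{\phi}.
\]

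The decisive difference from Corollary~\ref{Corxi} is that there the surviving term was $O(\Delta)$, so sending $\Delta\to0$ sufficed; here the $\Delta$-independent term $-f_{\phi}$ is present and points the wrong way, since $\Omega(n)\to-f_{\phi}<0$ as $\Delta\to0$. I would therefore take $\Delta$ \emph{large} rather than small, so that the reaction term overcomes $-f_{\phi}$; heuristically this is the steady-state regime of the implicit Euler step. Using the push-through form $\Gamma(n)=(I_{ms}-\Delta\Psi(n))^{-1}$ together with the Woodbury identity to track the dominant balance in the backward recursion for $\lambda(\cdot)$ as $\Delta$ grows, I expect the expression to collapse to
\[
\Omega(n)\longrightarrow-\bigl(I_{rs}+W(n)^{\top}\bigr)^{-1}f_{\phi},\qquad W(n)=\mathcal{J}(\xi(n))\,\hat{N}(n)^{-1}\hat{V}K,
\]
where the corrections from the other periods vanish in the same limit.

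It then remains to certify that this limiting vector is positive and, by continuity, stays positive at all sufficiently large finite $\Delta$. Here I would lean on the network structure: because $N$ is an invertible compartmental matrix, $\hat{N}^{-1}\le0$ entrywise, while the Monod and Contois Jacobians satisfy $\mathcal{J}\ge0$, and together with the sign pattern of the stoichiometry $K$ these govern the sign of $W(n)$ and hence of $\bigl(I_{rs}+W(n)^{\top}\bigr)^{-1}f_{\phi}$ for $f_{\phi}>0$.

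I expect the genuine obstacle to be exactly this last step. Unlike in Corollary~\ref{Corxi}, the positive contribution must come from the stoichiometric--Jacobian coupling $K^{\top}\hat{V}(\cdots)\hat{V}^{-1}\mathcal{J}(\xi(k))^{\top}f_{\phi}$, and while $\mathcal{J}\hat{N}^{-1}\hat{V}$ is sign-definite, postmultiplying by $K$ reintroduces mixed signs, so $\bigl(I_{rs}+W(n)^{\top}\bigr)^{-1}$ is not obviously a positive operator on $f_{\phi}$. Making this precise amounts to verifying that the net reaction growth encoded in $\mathcal{J}$ dominates the dilution encoded in $\hat{N}$; the remaining difficulty is to secure this bound \emph{uniformly in} $n$ with a single $\Delta$, since the $\Gamma$-products couple the time periods and the optimal $\xi(\cdot)$ (and thus $\mathcal{J}(\xi(k))$) is only known through the assumed bounds.
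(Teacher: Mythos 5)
Your proposal does not close, and you correctly sense where: the large-$\Delta$ route is a dead end, not an unfinished estimate. Even granting your limiting formula, the vector $-\bigl(I_{rs}+W(n)^{\top}\bigr)^{-1}f_{\phi}$ with $W(n)^{\top}=K^{\top}\hat{V}\bigl(\hat{N}(n)^{\top}\bigr)^{-1}\mathcal{J}(\xi(n))^{\top}$ (which is exactly the steady-state multiplier of Theorem~\ref{Thgenss} with $f_{\xi}=0$) is simply not positive in general under $f_{\phi}>0$: take kinetics with small slope, so $\mathcal{J}(\xi)\approx 0$; then $W(n)\approx 0$ and the limit is $\approx -f_{\phi}<0$. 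No sign analysis of $K$, $\hat{N}^{-1}$ and $\mathcal{J}$, and no uniformity-in-$n$ argument, can repair that.

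The paper's own proof is the opposite regime: it runs the same small-$\Delta$ logic as Corollary~\ref{Corxi}, setting $\Gamma(l)\approx I$ to get display (\ref{OmegaCorphi}), $\Omega(n)\approx\bigl(I-\Delta K^{\top}\sum_{k=n}^{\tau}\mathcal{J}(\xi(k))^{\top}\bigr)f_{\phi}\approx f_{\phi}>0$. But note that the identity term there enters with a $+$ sign, whereas the paper's own linear-objective expression for $\Omega(n)$ --- which your first display reproduces faithfully --- has it entering as $-f_{\phi}$. The two are inconsistent, and it is your computation, not (\ref{OmegaCorphi}), that agrees with the definition of $\Omega(n)$ (leading term $-\nabla\mathcal{F}_{\phi}$) and with the KKT identity $\rho(n)=-\nabla\mathcal{F}_{\phi}(T(n))-K^{\top}\hat{V}\lambda(n)$ in the proof of Theorem~\ref{Thgen}. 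In other words, the obstruction you found is a sign error internal to the paper: as written, $f_{\xi}=0$ and $f_{\phi}>0$ force $\rho(n)=\Omega(n)\to -f_{\phi}<0$ as $\Delta\to 0$, contradicting dual feasibility $\rho(n)\geq 0$. The consistent hypothesis is $f_{\phi}<0$, i.e., the objective rewards production of $T$ --- precisely the biogas objective $\mathcal{F}_{\phi}(T(n))=-\sum_{i\in\mathcal{M}}V_{ii}\sigma_i^{\top}T_i(n)$, and precisely the complementary-slackness intuition that exactness needs the optimizer to push $T(n)$ up against $T(n)\leq\phi(\xi(n))$. With that sign fixed, your first display finishes the proof by the Corollary~\ref{Corxi} argument: the eigenvalues of each $\Gamma(l)$ lie in $[1-\Delta\bar{\psi},1]$ and the Jacobians are bounded, so the correction is norm-bounded by a constant times $(\tau-n+1)\Delta$, whence $\Omega(n)=-f_{\phi}+O(\Delta)>0$ for all $n$ once $\Delta$ is small. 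The gap in your proposal is therefore strategic: you trusted the stated (mis-signed) hypothesis and tried to force it with a large-$\Delta$ limit that provably fails, instead of recognizing that the $-f_{\phi}$ you (correctly) obtained at small $\Delta$ signals a typo, after which the paper's two-line argument applies verbatim.
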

\begin{proof}
(Sketch)
Following the same logic as Corollary~\ref{Corxi}, we can choose $\Delta>0$ such that 
\begin{align}
\Omega(n)\approx&\left(I_{ms}-\Delta K^{\top}\sum_{k=n}^{\tau}\mathcal{J}(\xi(k))^{\top}\right)f_{\phi}.\label{OmegaCorphi}
\end{align}
The second term in the parentheses can be made arbitrarily small by choosing $\Delta$ to be small. In this case
$\Omega(n)\approx f_{\phi}$, which is positive if $f_{\phi}>0$.
\end{proof}

A shortcoming of Corollaries~\ref{Corxi} and~\ref{Corphi} is that they can be limited to short time intervals. This is because if an interval is to remain constant, the number of time periods, $\tau$, must increase as the step, $\Delta$, decreases, and the approximations in the proofs of the corollaries do not hold for increasing $\tau$. On the other hand, these are conservative sufficient conditions. For example, the second term in the parentheses of (\ref{OmegaCorphi}) will often be positive semidefinite or nearly so. This is because it is typical for $K$ to be lower triangular with a negative diagonal and for $\mathcal{J}(\xi(n))$ to be nonnegative and nearly diagonal. We therefore expect that exactness will sometimes hold for larger $\Delta$ over longer time intervals.

We view these theoretical results not as a complete characterization of when exactness is guaranteed, but rather as evidence that $\mathcal{P}_\textrm{R}$ is exact for a meaningful set of problems, and as guidance as to how to identify them. While there are certainly problems of interest for which their conditions do not hold, $\mathcal{P}_\textrm{R}$ may nonetheless provide a useful and sometimes perfect approximation.

\subsection{Steady state}
It may be of interest to optimize (\ref{dyn}) in steady state, e.g., when the solution does not change significantly on the timescale of interest, to find the best operating point, or to reduce the number of variables. We obtain a steady state optimization by dropping the time index and replacing the finite difference in (\ref{P1}) with zero. The resulting (relaxed) optimization is:
\begin{subequations}
\begin{align}
\mathcal{P}_{\textrm{RS}}\quad&\min\;\mathcal{F}(\xi,T) \label{P0s}\\
\textrm{such that}\quad&T\leq\phi(\xi)\label{P3s}\\
&0=\hat{V}K  T+\hat{N}\xi+\hat{C}\xi^{\textrm{in}}\label{P1s}\\
&\left(\xi,\xi^{\textrm{in}},T\right)\in\Omega.\label{QUpss}
\end{align}
\end{subequations}

We remark that, in general, a solution to $\mathcal{P}_{\textrm{RS}}$ is not guaranteed to be an equilibrium of (\ref{dyn}). One special case in which guarantees do exist is the gradostat, which we discuss in Example~\ref{app:gradostat}. We refer the reader to~\cite{taylor2021grad} for a brief summary.

\begin{theorem}\label{Thgenss}
$\mathcal{P}_{\textrm{RS}}$ is exact if the network is outflow connected and at the optimal solution,
\begin{align*}
0<&\left(I_{ms}+K^{\top}\hat{V}\left(\hat{N}^{\top}\right)^{-1}\mathcal{J}(\xi)^{\top}\right)^{-1}
\left(K^{\top}\hat{V}\left(\hat{N}^{\top}\right)^{-1}\nabla  \mathcal{F}_{\xi}(\xi)-\nabla  \mathcal{F}_{\phi}(T)\right).
\end{align*}
\end{theorem}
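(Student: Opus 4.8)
The plan is to mimic the KKT-based argument of Theorem~\ref{Thgen}, specialized to the single-period problem in which the finite difference $\mathcal{D}_n[\xi(\cdot)]$ is replaced by zero. Exactness is equivalent to the relaxed constraint (\ref{P3s}) holding with equality at the optimum, and since strong duality is assumed, this follows from complementary slackness as soon as the dual multiplier $\rho\in\mathbb{R}^{rs}$ of (\ref{P3s}) is strictly positive. The stated inequality is precisely the condition $\rho>0$ written in closed form, so the whole proof reduces to solving the stationarity conditions for $\rho$ and reading off its sign.

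First I would form the Lagrangian (carrying over the assumption that $\Omega$ contributes no active constraints, so that only $\lambda$ and $\rho$ enter),
\[
\mathcal{L}=\mathcal{F}_{\xi}(\xi)+\mathcal{F}_{\phi}(T)+\rho^{\top}\left(T-\phi(\xi)\right)+\lambda^{\top}\left(\hat{V}KT+\hat{N}\xi+\hat{C}\xi^{\textrm{in}}\right),
\]
with $\lambda\in\mathbb{R}^{ms}$ the multiplier of (\ref{P1s}). Differentiating in $T$ and $\xi$ and setting the gradients to zero gives the steady-state analogs of (\ref{dL1})--(\ref{dL3}), namely $-\rho=\nabla\mathcal{F}_{\phi}(T)+K^{\top}\hat{V}\lambda$ and $\mathcal{J}(\xi)^{\top}\rho=\nabla\mathcal{F}_{\xi}(\xi)+\hat{N}^{\top}\lambda$. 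These are simpler than the transient case because there is neither a $\hat{V}/\Delta$ term nor any coupling to a neighboring time period.

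Next I would eliminate $\lambda$ exactly as in the proof of Theorem~\ref{Thgen}: premultiply the first stationarity equation by $\mathcal{J}(\xi)^{\top}$ and add it to the second, yielding $\left(\hat{N}^{\top}+\mathcal{J}(\xi)^{\top}K^{\top}\hat{V}\right)\lambda=-\left(\nabla\mathcal{F}_{\xi}(\xi)+\mathcal{J}(\xi)^{\top}\nabla\mathcal{F}_{\phi}(T)\right)$. Outflow connectedness makes $\hat{N}$ invertible, so I can factor $\hat{N}^{\top}+\mathcal{J}(\xi)^{\top}K^{\top}\hat{V}=\hat{N}^{\top}\left(I_{ms}+(\hat{N}^{\top})^{-1}\mathcal{J}(\xi)^{\top}K^{\top}\hat{V}\right)$, solve for $\lambda$, and substitute back into $-\rho=\nabla\mathcal{F}_{\phi}(T)+K^{\top}\hat{V}\lambda$. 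The final step is to apply the push-through identity $B(\hat{N}^{\top}+CB)^{-1}=(I+BAC)^{-1}BA$ with $B=K^{\top}\hat{V}$, $A=(\hat{N}^{\top})^{-1}$, and $C=\mathcal{J}(\xi)^{\top}$; this collapses the expression into the closed form of the theorem once the $\nabla\mathcal{F}_{\phi}(T)$ terms are combined. Strict positivity of $\rho$ then forces (\ref{P3s}) to be tight componentwise, which is exactness.

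I expect the only real obstacle to be the algebra around invertibility and the push-through step: I must confirm that $\hat{N}^{\top}+\mathcal{J}(\xi)^{\top}K^{\top}\hat{V}$ is invertible, which amounts to the factor $I+K^{\top}\hat{V}(\hat{N}^{\top})^{-1}\mathcal{J}(\xi)^{\top}$ being invertible (equivalently $I+(\hat{N}^{\top})^{-1}\mathcal{J}(\xi)^{\top}K^{\top}\hat{V}$, as the two share their nonzero spectrum), and this is already implicit in the statement since its inverse appears there. A useful sanity check is dimensional: the bracketed identity acts on $\rho\in\mathbb{R}^{rs}$, so it is $I_{rs}$ rather than $I_{ms}$. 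Everything else is routine linear algebra, and—unlike Corollaries~\ref{Corxi} and~\ref{Corphi}—no limiting argument in $\Delta$ and no estimate beyond outflow connectedness is needed.
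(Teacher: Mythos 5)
Your proposal is correct and takes essentially the same route as the paper: form the Lagrangian of $\mathcal{P}_{\textrm{RS}}$, write the stationarity conditions in $(\rho,\lambda)$, eliminate $\lambda$ using the invertibility of $\hat{N}$ guaranteed by outflow connectedness, and conclude exactness from complementary slackness once the closed-form multiplier $\rho$ is strictly positive. The only difference is algebraic routing—the paper solves the second stationarity equation for $\lambda$ in terms of $\rho$ and substitutes into the first, which yields the stated formula directly without the push-through identity or the extra invertibility check your order of elimination requires—and your dimensional sanity check is right: the identity acting on $\rho$ should be $I_{rs}$, not $I_{ms}$ as written in the paper.
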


\begin{proof}
The Lagrangian of $\mathcal{P}_{\textrm{RS}}$ is
\begin{align*}
\mathcal{L}&=\mathcal{F}(\xi,T) +\rho^{\top}\left(T-\phi(\xi)\right)+\lambda^{\top}\left( \hat{V}KT+\hat{N}\xi + \hat{C}\xi^{\textrm{in}} \right).
\end{align*}
Differentiating the Lagrangian and setting it to zero gives
\begin{align*}
0&=\nabla  \mathcal{F}_{\phi}(T) +\rho + K^{\top}\hat{V}\lambda\\
\mathcal{J}(\xi)^{\top}\rho &=\nabla  \mathcal{F}_{\xi}(\xi) + \hat{N}^{\top}\lambda.
\end{align*}

We now solve for $\rho$. $\hat{N}$ is invertible due to outflow-connectedness. Then
\begin{align*}
\lambda&=\left(\hat{N}^{\top}\right)^{-1}\left(\mathcal{J}(\xi)^{\top}\rho - \nabla  \mathcal{F}_{\xi}(\xi) \right),
\end{align*}
and
\begin{align*}
0&=\nabla  \mathcal{F}_{\phi}(T) +\rho + K^{\top}\hat{V}\left(\hat{N}^{\top}\right)^{-1}\left(\mathcal{J}(\xi)^{\top}\rho - \nabla  \mathcal{F}_{\xi}(\xi) \right).
\end{align*}
Solving, we have
\begin{align*}
\rho&=\left(I_{ms}+K^{\top}\hat{V}\left(\hat{N}^{\top}\right)^{-1}\mathcal{J}(\xi)^{\top}\right)^{-1}
\left(K^{\top}\hat{V}\left(\hat{N}^{\top}\right)^{-1}\nabla  \mathcal{F}_{\xi}(\xi)-\nabla  \mathcal{F}_{\phi}(T)\right).
\end{align*}
By complementary slackness, $\mathcal{P}_{\textrm{RS}}$ is exact when $\rho>0$. 
\end{proof}

As in the latter part of the previous section, we now assume that $\mathcal{F}_{\xi}$ and $\mathcal{F}_{\phi}$ are linear with gradients $f_{\xi}\in\mathbb{R}^{ms}$ and $f_{\phi}\in\mathbb{R}^{rs}$. 

\begin{corollary}\label{Corss}
Suppose that for all $\xi\geq0$,
\[
\left(I_{ms}+K^{\top}\hat{V}\left(\hat{N}^{\top}\right)^{-1}\mathcal{J}(\xi)^{\top}\right)^{-1}\geq 0,
\]
and 
\[
K^{\top}\hat{V}\left(\hat{N}^{\top}\right)^{-1}f_{\xi}-f_{\phi}\geq0.
\]
If at least one of the inequalities is strict, then $\mathcal{P}_{\textrm{RS}}$ is exact.
\end{corollary}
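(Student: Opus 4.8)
The plan is to specialize the dual-multiplier formula of Theorem~\ref{Thgenss} to the linear-objective setting and then check the sign condition $\rho>0$ that theorem requires. Substituting $\nabla\mathcal{F}_{\xi}(\xi)=f_{\xi}$ and $\nabla\mathcal{F}_{\phi}(T)=f_{\phi}$ into the expression for $\rho$ collapses it to the product $\rho=G\,w$, where
\[
G=\left(I_{ms}+K^{\top}\hat{V}\left(\hat{N}^{\top}\right)^{-1}\mathcal{J}(\xi)^{\top}\right)^{-1},
\qquad
w=K^{\top}\hat{V}\left(\hat{N}^{\top}\right)^{-1}f_{\xi}-f_{\phi},
\]
both evaluated at the optimal solution. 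Outflow-connectedness makes $\hat{N}$ invertible, so $G$ and $w$ are well defined; the two hypotheses of the corollary state exactly that $G\geq 0$ entrywise and $w\geq 0$ entrywise for every $\xi\geq 0$, hence in particular at the optimum.

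First I would observe that the product of an entrywise nonnegative matrix and an entrywise nonnegative vector is entrywise nonnegative, so the two hypotheses give $\rho=G\,w\geq 0$ for free. The real content of the corollary is promoting this to the strict inequality $\rho>0$ demanded by Theorem~\ref{Thgenss}, and this is where the strictness assumption enters.

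The hard part will be propagating a single strict inequality through the matrix--vector product. The key fact I would use is that $G$ is a matrix inverse and therefore invertible, so it has no zero row: every row $i$ contains at least one strictly positive entry. If the vector inequality is strict, $w>0$, then for each $i$ I bound $(Gw)_i\geq G_{ij}w_j>0$ using such a positive entry of row $i$, giving $\rho>0$. If instead the matrix inequality is strict, $G>0$, then $(Gw)_i=\sum_j G_{ij}w_j>0$ for every $i$ as soon as $w$ has a positive component, again giving $\rho>0$. In either branch $\rho>0$, so $\mathcal{P}_{\textrm{RS}}$ is exact by the complementary-slackness argument of Theorem~\ref{Thgenss}. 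I would flag that the only delicate point is the degenerate possibility $w=0$ in the strict-matrix branch; this is non-generic and excluded in practice because at least one objective gradient is nonzero, but it is worth stating explicitly so the sufficient condition is airtight.
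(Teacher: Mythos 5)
Your proof is correct and takes the same (and essentially only) route as the paper, which in fact states this corollary without any explicit proof: it is meant as an immediate specialization of Theorem~\ref{Thgenss} to linear objectives, i.e.\ precisely the substitution $\rho = Gw$ you perform, followed by the observation that a nonnegative matrix times a nonnegative vector is nonnegative. Your careful propagation of strictness is a genuine refinement over what the paper leaves implicit, and the degenerate case you flag is a real one: if the matrix inequality is strict but $w=0$, then $\rho = Gw = 0$ and the complementary-slackness argument of Theorem~\ref{Thgenss} yields nothing, so the corollary as literally stated needs either the reading that strictness of the second inequality means $w>0$, or an explicit assumption that $w\neq 0$ when only the matrix inequality is strict.
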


Like Theorem~\ref{Thgenss}, Corollary~\ref{Corss} depends on the optimal solution, but to a lesser extent. Whereas Theorem~\ref{Thgenss} depends on $\xi$ and $T$ through the objective and $\mathcal{J}(\xi)$, Corollary~\ref{Corss} only depends on $\xi$ through $\mathcal{J}(\xi)$. This is more manageable because $\mathcal{J}(\xi)$ is often positive and nearly diagonal. For example, if we assume assume that biomass is constant and all growth rates are Monod, as in Example~\ref{ex:monod}, then $\mathcal{J}(\xi)$ is positive on the diagonal and zero elsewhere.


\section{Examples}\label{app}

\subsection{The gradostat}\label{app:gradostat}
The gradostat is a special case of (\ref{dyn}) where in each tank $i\in\mathcal{S}$, a single substrate, $S_i$, is converted to a single type of biomass, $X_i$. The conversion occurs at the rate $\phi_i(S_i,X_i)/y$, where $y$ is the yield. Then $\xi_i=[S_i,X_i]^{\top}$ and $\kappa_i=[-1/y,1]^{\top}$. In~\cite{taylor2021grad}, several examples are given in which the gradostat is optimized over time and in steady state.

Here we first apply Corollary~\ref{Corxi} to the gradostat. Suppose $\mathcal{F}_{\xi}(\xi(n))=f_S^{\top}S(n)+f_X^{\top}X(n)$. In this case, Corollary~\ref{Corxi} holds if $-f_S/y+f_X<0$. When dealing with the decontamination of undesirable substrates, we do not seek to minimize biomass, in which case $f_X=0$ and the condition is satisfied if $f_S>0$.

We now apply Theorem~\ref{Thgenss} to the gradostat in steady state. Let $\mathcal{J}_S(\xi)\in\mathbb{R}^{r\times s}$ be the Jacobian matrix of $\phi(\xi)$ with respect to $S$, and let $\mathcal{J}_X(\xi)\in\mathbb{R}^{r\times s}$ be the Jacobian matrix of $\phi(\xi)$ with respect to $X$. $\mathcal{P}_{\textrm{RS}}$ is exact if
\begin{align*}
&\left(I_s+V\left(N^{\top}\right)^{-1}\left(-\mathcal{J}_S(\xi)^{\top}/y+\mathcal{J}_X(\xi)^{\top}\right)\right)^{-1}
\left(K^{\top}V\left(N^{\top}\right)^{-1}\nabla  \mathcal{F}_{\xi}(\xi)-\nabla  \mathcal{F}_{\phi}(T)\right)>0 .
\end{align*}
It is straightforward to verify that when $\mathcal{F}_{\xi}(\xi)=0$, this condition directly implies Theorem 1 in~\cite{taylor2021grad}. Similarly, we obtain Corollary 1 in~\cite{taylor2021grad} if we specialize Corollary~\ref{Corss} to the gradostat.

\subsection{Wastewater treatment}\label{app:ww}
In this example, we optimize an idealized wastewater treatment system consisting of three wastewater treatment plants of the city of Paris and its suburbs. The model is based on that in~\cite{ROBLESRODRIGUEZ2018880,robles2019management}, and the influent data from the \texttt{Inf\_rain\_2006} dataset of~\cite{IWABSM1}. We implemented the model using the parser CVX~\cite{cvx} and the solver Gurobi~\cite{gurobi} on a personal computer from 2014 with a 1.4 GHz dual-core processor.

In the present study, the flow rates are considered constant and given by $Q^{\textrm{in}}_1=0.1\;m^3/s$, $Q^{\textrm{in}}_2=0.4\;m^3/s$, and $Q^{\textrm{in}}_3=0.2\;m^3/s$. All tanks have volume $1000\;m^3$. In each plant $i\in\mathcal{S}$, the state vector $\xi_i=\left[\xi^{\textrm{BOD}}_i,\xi^{\textrm{NH}_4^+}_i,\xi^{\textrm{NO}_2^-}_i,\xi^{\textrm{NO}_3^-}_i\right]^{\top}\in\mathbb{R}^4$ consists of biochemical oxygen demand, ammonia nitrogen, nitrite, and nitrate. The biomass in each time period is assumed to be an exogenous parameter, and therefore not a component of the process state. This is an admissible assumption in the sense that the biomass concentration is typically much slower than that of the other process components, and has a larger amplitude.

In each tank $i\in\mathcal{S}$, the elements of the process state have kinetics $\phi_i^{\textrm{BOD}},\phi_i^{\textrm{NH}_4^+},\phi_i^{\textrm{NO}_2^-}$, and $\phi_i^{\textrm{NO}_3^-}$. All assume Monod growth rates with parameters given in Table~\ref{GrowthPar}, which comes from Table~1 in~\cite{Ayed2018MS}. Because the biomass is constant, the growth kinetics can be represented as SOC constraints in $\mathcal{P}_{\textrm{R}}$.
\begin{table}[h]
	\centering
	\begin{tabular}{|l|c|c|c|}
	\hline
	Parameter & Plant 1 & Plant 2 & Plant 3  \\
	\hline
	$\mu^{\textrm{BOD}}$ (1/day) & $3.99$ & $2.56$ & $1.93$\\
	$\mu^{\textrm{NH}_4^+}$ & $0.84$ & $0.83$ & $0.89$\\
	$\mu^{\textrm{NO}_2^-}$ & $1.68$ & $1.27$ & $0.92$\\
	$\mu^{\textrm{NO}_3^-}$ & $1.21$ & $1.38$ & $0.85$\\
	\hline
	$K^{\textrm{BOD}}$ (mg/L) & $13.67$ & $11.65$ & $14.26$\\
	$K^{\textrm{NH}_4^+}$ & $6.59$ & $14.98$ & $8.53$\\
	$K^{\textrm{NO}_2^-}$ & $2.46$ & $1.15$ & $2.55$\\
	$K^{\textrm{NO}_3^-}$ & $1.40$ & $2.69$ & $4.20$\\
	\hline
	$y^{\textrm{NH}_4^+,\textrm{NO}_2^-}$ & $0.28$ & $0.25$ & $0.27$\\
	$y^{\textrm{NO}_2^-,\textrm{NO}_3^-}$ & $0.68$ & $0.64$ & $0.70$\\
	\hline
	\end{tabular}
	\caption{Growth function parameters}
	\label{GrowthPar}
\end{table}
The stochiometric matrix for each plant $i\in\mathcal{S}$ is
\[
\kappa_i=\left[
\begin{array}{cccc}
-1&0 & 0 & 0\\
0 & -1 & 0 & 0\\
0 & 1/y_i^{\textrm{NH}_4^+,\textrm{NO}_2^-} & -1 & 0\\
0 & 0 & 1/y_i^{\textrm{NO}_2^-,\textrm{NO}_3^-} & -1
\end{array}
\right].
\]
We used the implicit Euler method, $\mathcal{D}_n[\xi(\cdot)]=(\xi(n)-\xi(n-1))/\Delta$, with the stepsize $\Delta=1$, which corresponds to 15 minutes. There are $\tau=1345$ time periods, so that the total time is two weeks. The boundary condition is $\xi(0)=\xi(\tau)$. This could represent periodic operation, or exogenous conditions that are similar from week to week.

The process state must satisfy $\xi^{\textrm{BOD}}_i(n)\leq 150$ mg/L and $\xi^{\textrm{NH}_4^+}_i(n)\leq 60$ mg/L for each $i\in\mathcal{S}$ and $n\in\mathcal{N}$. Without these constraints, most of the substrate would be directed to Plant 1, which is more efficient than the others. This constraint could represent, for example, regulatory limits on the pollution released by the plants.

In each time period, fixed quantities of $\textrm{BOD}$ and $\textrm{NH}_4^+$, $\Xi^{\textrm{BOD}}(n)$ and $\Xi^{\textrm{NH}_4^+}(n)$, must be allocated over the treatment plants. These quantities are based on the \texttt{Inf\_rain\_2006} dataset of~\cite{IWABSM1}, which covers 1345 15-minute intervals. $\Xi^{\textrm{BOD}}(\cdot)$ is set to $S_{\textrm{S}}$ (readily biodegradable substrate), and $\Xi^{\textrm{NH}_4^+}(\cdot)$ to $S_{\textrm{NH}}$ ($\textrm{NH}_4^+$ and $\textrm{NH}_3$ nitrogen) in~\cite{IWABSM1}. The allocation is represented by the linear constraints
\begin{align*}
\Xi^{\textrm{BOD}}(n)&=\sum_{i=1}^3Q^{\textrm{in}}_i(n)\xi^{\textrm{BOD},\textrm{in}}_i(n)\\
\Xi^{\textrm{NH}_4^+}(n)&=\sum_{i=1}^3Q^{\textrm{in}}_i(n)\xi^{\textrm{NH}_4^+,\textrm{in}}_i(n),
\end{align*}
for each $n\in\mathcal{N}$. The other influent concentrations are $\xi^{\textrm{NO}_2^-,\textrm{in}}_i(n)=3$ and $\xi^{\textrm{NO}_3^-,\textrm{in}}_i(n)=10$ for $i\in\mathcal{S}$ and $n\in\mathcal{N}$. 

The plant biomass concentrations are set to $\bar{X}_i(n)=100\left(1+(-1)^i\sin(10\pi n/\tau)\right)$ for $i\in\mathcal{S}$ and $n\in\mathcal{N}$. Observe that this has larger magnitude and varies slower than the other influents.

For each $i\in\mathcal{S}$ and $n\in\mathcal{N}$, the optimization variables are $\xi_i(n)$, $\xi^{\textrm{BOD},\textrm{in}}_i(n)$, and $\xi^{\textrm{NH}4,\textrm{in}}_i(n)$. The objective is to minimize the untreated wastewater released by the plants, given by
\[
\sum_{n\in\mathcal{N}}\sum_{i\in\mathcal{S}}Q^{\textrm{out}}_i\eta_i^{\top}\xi_i(n),
\]
where we note that $Q^{\textrm{out}}_i=Q^{\textrm{in}}_i$, and $\eta_i=[2,2,0.3,0.1,0]^{\top}$. This objective was chosen to satisfy Corollary~\ref{Corxi}. Note, however, that the result does not fully apply due to the boundary condition, $\xi(0)=\xi(\tau)$, and the upper limit on the process state.

The convex relaxation $\mathcal{P}_{\textrm{R}}$ contains 145272 variables (in standard form) and took 17 minutes to solve. The 18 solver iterations accounted for only four seconds, and the rest of the time was used for preprocessing. Despite not fully satisfying Corollary~\ref{Corxi}, the solution was exact in all time periods.

Figures~\ref{fig:WWSin} and~\ref{fig:WWS} respectively show the optimal influent allocation, $\xi^{\textrm{in}}(\cdot)$, and the resulting plant effluent concentrations, $\xi(\cdot)$, between hours 175 and 275.
 \begin{figure}[h]
 \centering
\includegraphics[width=0.9\columnwidth]{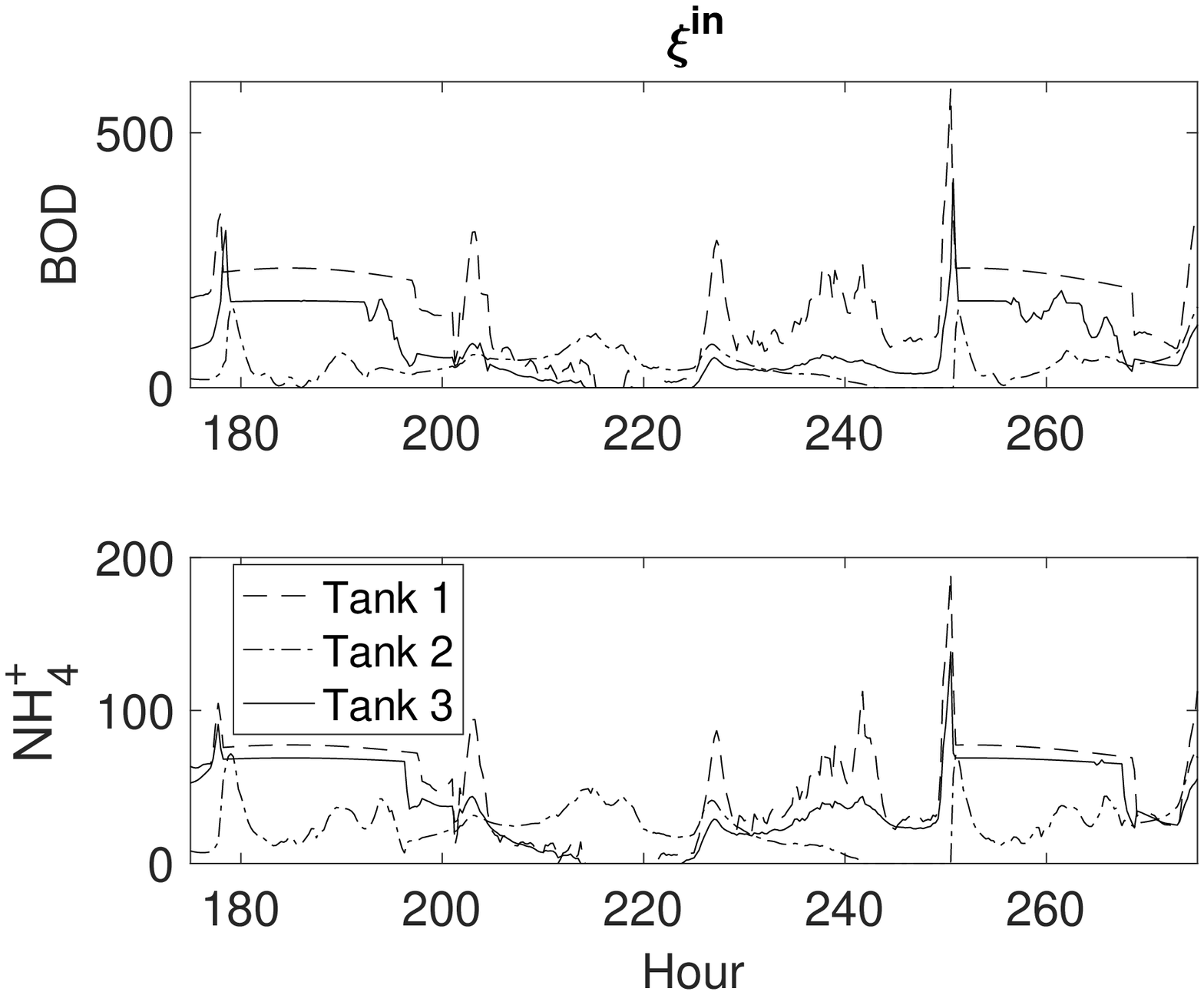} 
\caption{$\xi^{\textrm{in}}(\cdot)$ from hour 175 to 275. The units are mg/L.}
\label{fig:WWSin}
\end{figure}
 \begin{figure}[h]
 \centering
\includegraphics[width=0.9\columnwidth]{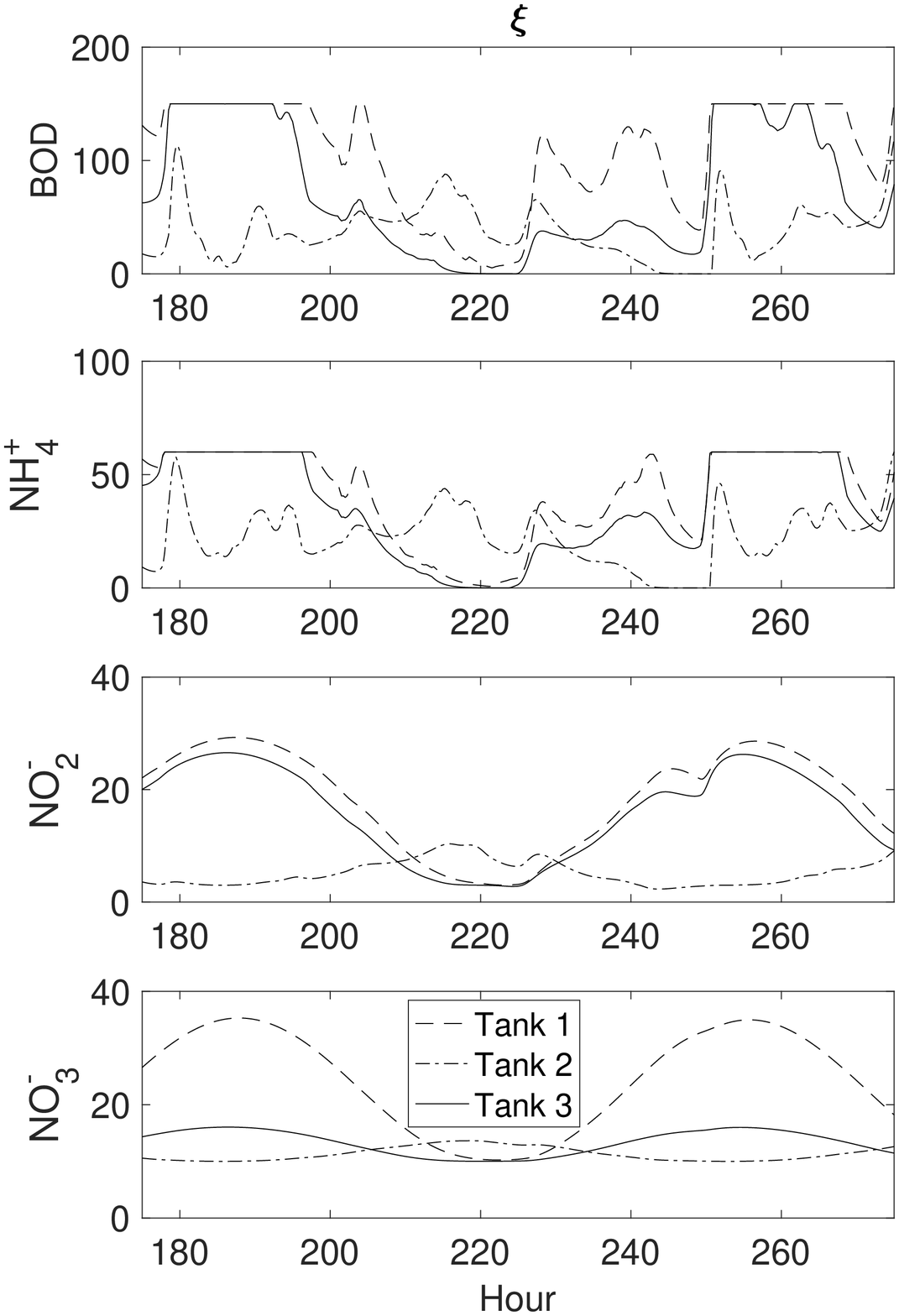} 
\caption{$\xi(\cdot)$ from hour 175 to 275. The units are mg/L.}
\label{fig:WWS}
\end{figure}
The slower variation of the biomass dominates that of the diurnal variation in the \texttt{Inf\_rain\_2006} dataset, leading to concentration increases whenever the biomass influent into each plant is high. There is a spike in $\Xi^{\textrm{BOD}}(\cdot)$ and $\Xi^{\textrm{NH}_4^+}(\cdot)$ around hour 250. This causes $\xi^{\textrm{BOD}}_i(\cdot)$, and $\xi^{\textrm{NH}4}_i(n)$ to hit their concentration limits in Plants 1 and 3. When this happens, the remainder is allocated to Plant 2, which has little biomass at the time, and hence cannot transform the substrates as efficiently.

\section{Conclusion}
We have formulated a convex relaxation for optimizing a broad class of bioprocesses. We proved that the relaxation is exact under simple conditions, and implemented it on a wastewater treatment example with over one hundred thousand variables. We believe that a wide range of problems can be approached in this manner due to the generality of the model and the tractability of SOCP.

One direction of future work is dealing with inexactness. Two options are deriving general convex underestimators to limit the relaxation gap, as in~\cite{taylor2021grad}, and finding local minima of the non-relaxed problem. In particular, the concave-convex procedure~\cite{yuille2003concave} is well-suited to the nonconvexity encountered here and would entail solving a sequence of SOCPs. We also intend to incorporate new elements into the model such as biomass death and recirculation, and to apply the relaxation in other contexts such as enzymes, where Michaelis-Menten kinetics~\cite{michaelis1913kinetik} have the same form as Monod kinetics.


\section*{Acknowledgments}
Funding is acknowledged from the Natural Sciences and Engineering Research Council of Canada and the French LabEx NUMEV (Project ANR-10 LABX-20), incorporated into the I-Site MUSE, which partially funded the sabbatical of J.A.~Taylor at MISTEA lab.

\bibliography{MainBib}

\begin{thebibliography}{10}
\expandafter\ifx\csname url\endcsname\relax
  \def\url#1{\texttt{#1}}\fi
\expandafter\ifx\csname urlprefix\endcsname\relax\def\urlprefix{URL }\fi
\expandafter\ifx\csname href\endcsname\relax
  \def\href#1#2{#2} \def\path#1{#1}\fi

\bibitem{taylor2021grad}
J.~Taylor, A.~Rapaport, Second-order cone optimization of the gradostat,
  Computers \& Chemical Engineering 151 (2021) 107347.

\bibitem{monod1949growth}
J.~Monod, The growth of bacterial cultures, Annual Review of Microbiology 3~(1)
  (1949) 371--394.

\bibitem{contois1959kinetics}
D.~Contois, Kinetics of bacterial growth: relationship between population
  density and specific growth rate of continuous cultures, Microbiology 21~(1)
  (1959) 40--50.

\bibitem{biegler2010nonlinear}
L.~T. Biegler, Nonlinear programming: concepts, algorithms, and applications to
  chemical processes, SIAM, 2010.

\bibitem{ocampo2010model}
C.~Ocampo-Martinez, Model predictive control of wastewater systems, Springer
  Science \& Business Media, 2010.

\bibitem{bastin2013line}
G.~Bastin, D.~Dochain, On-line estimation and adaptive control of bioreactors,
  Vol.~1, Elsevier, 2013.

\bibitem{srinivasan2003dynamic}
B.~Srinivasan, S.~Palanki, D.~Bonvin, Dynamic optimization of batch processes:
  {I}. characterization of the nominal solution, Computers \& Chemical
  Engineering 27~(1) (2003) 1--26.

\bibitem{smith1995theory}
H.~Smith, P.~Waltman, The theory of the chemostat: dynamics of microbial
  competition, Vol.~13, Cambridge University Press, 1995.

\bibitem{Boyd1998SOCP}
M.~Lobo, L.~Vandenberghe, S.~Boyd, H.~Lebret, Applications of second-order cone
  programming, Linear Algebra and its Applications 284 (1998) 193--228.

\bibitem{jacquez1993qualitative}
J.~Jacquez, C.~Simon, Qualitative theory of compartmental systems, SIAM Review
  35~(1) (1993) 43--79.

\bibitem{betts1998survey}
J.~Betts, Survey of numerical methods for trajectory optimization, Journal of
  Guidance, Control, and Dynamics 21~(2) (1998) 193--207.

\bibitem{mccormick1976computability}
G.~McCormick, Computability of global solutions to factorable nonconvex
  programs: {P}art {I}---{C}onvex underestimating problems, Mathematical
  Programming 10~(1) (1976) 147--175.

\bibitem{Sherali1992bilinear}
H.~Sherali, A.~Alameddine, A new reformulation-linearization technique for
  bilinear programming problems, Journal of Global Optimization 2 (1992)
  379--410.

\bibitem{boyd2011distributed}
S.~Boyd, N.~Parikh, E.~Chu, B.~Peleato, J.~Eckstein, Distributed Optimization
  and Statistical Learning Via the Alternating Direction Method of Multipliers,
  Now Publishers, 2011.

\bibitem{ROBLESRODRIGUEZ2018880}
C.~Robles-Rodriguez, J.~Bernier, V.~Rocher, D.~Dochain, A simple model of
  wastewater treatment plants for managing the quality of the {S}eine {R}iver,
  IFAC-PapersOnLine 51~(18) (2018) 880 -- 885, 10th IFAC Symposium on Advanced
  Control of Chemical Processes.

\bibitem{robles2019management}
C.~Robles-Rodriguez, A.~Ben-Ayed, J.~Bernier, V.~Rocher, D.~Dochain, Management
  of an integrated network of wastewater treatment plants for improving water
  quality in a river basin, in: IFAC Symposia Series, Vol.~52, 2019, p. 358.

\bibitem{IWABSM1}
J.~Alex, L.~Benedetti, J.~Copp, K.~Gernaey, U.~Jeppsson, I.~Nopens, M.~Pons,
  J.~Steyer, P.~Vanrolleghem, Benchmark simulation model no. 1 ({BSM}1), Tech.
  rep., The International Water Association (2018).

\bibitem{cvx}
M.~Grant, S.~Boyd, {CVX}: Matlab software for disciplined convex programming,
  version 2.1 (Mar. 2014).

\bibitem{gurobi}
{Gurobi Optimization, LLC}, Gurobi optimizer reference manual (2021).

\bibitem{Ayed2018MS}
A.~{Ben Ayed}, Real-time optimisation of a wastewater treatment network,
  Master's thesis, Universit\'{e} catholique de Louvain (2018).

\bibitem{yuille2003concave}
A.~L. Yuille, A.~Rangarajan, The concave-convex procedure, Neural computation
  15~(4) (2003) 915--936.

\bibitem{michaelis1913kinetik}
L.~Michaelis, M.~L. Menten, Die kinetik der invertinwirkung, Biochem. Z
  49~(333-369) (1913) 352.

\end{thebibliography}

\end{document}